\newcommand{\R}{\ensuremath{\mathbb R}}
\newcommand{\N}{\ensuremath{\mathbb N}}
\newcommand{\Z}{\ensuremath{\mathbb Z}}
\newcommand{\unknot}{\raisebox{-0.22em}{\includegraphics{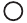}}}
\newcommand\Kauff{(-A^2-A^{-2})}
\newcommand{\qq}[1]{``#1''}
\newcommand{\isoteq}{\stackrel{\text{isot.}}{=}}
\newcommand{\eqeq}[1]{\stackrel{\text{\eqref{#1}}}{=}}
\newcommand{\A}[2]{\raisebox{#1}{\includegraphics[page=#2]{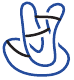}}}
\newcommand{\linksfr}{\mathcal{L}^\text{fr}} 
\newcommand{\linkstop}{\mathcal{L}^\text{top}} 
\newcommand{\submodule}{\mathcal{S}}
\newcommand{\kbsm}{\mathcal{K}}  
\newcommand{\kbsmfr}{\mathcal{K}^\text{fr}} 
\newcommand{\kbsmtop}{\mathcal{K}^\text{top}} 
\newcommand{\basis}{\mathcal{B}}
\newcommand{\basisfr}{\mathcal{B}^\text{fr}}
\newcommand{\basistop}{\mathcal{B}^\text{top}}
\newcommand\Q[1]{\raisebox{-1.5em}{\includegraphics[page=#1,scale=0.75]{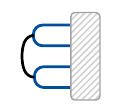}}}
\newcommand{\reid}[1]{\raisebox{0.65em}{\;\;$\xleftrightarrow{\text{#1}}$\;\;}}
\newcommand{\reidf}[1]{\raisebox{1.2em}{\;$\xleftrightarrow{\text{#1}}$\;}}
\newcommand\iconreid[1]{\raisebox{-0.7em}{\includegraphics[page=#1]{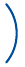}}}
\newcommand\icon[1]{ \raisebox{-0.7em}{\includegraphics[page=#1]{icons}}}
\newcommand\nevozel{ \raisebox{-0.7em}{\includegraphics{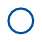}}}
\newcommand\itheta[1]{ \raisebox{-1.85em}{\begin{overpic}[page=45]{icons}\put(15,61){\small{$#1$}}\end{overpic}}}
\newcommand\iH[1]{ \raisebox{-1.85em}{\begin{overpic}[page=46]{icons}\put(39,44){\small{$#1$}}\end{overpic}}}
\newcommand\iunknot{ \raisebox{-1.85em}{\begin{overpic}[page=48]{icons}\end{overpic}}}
\newcommand\iunknots{ \raisebox{-1.85em}{\begin{overpic}[page=47]{icons}\end{overpic}}}
\theoremstyle{definition}
\newtheorem{definition}{Definition}[section]
\newtheorem{theorem}{Theorem}[section]
\newtheorem{lemma}{Lemma}[section]
\title{The Bracket polynomial of bonded knots and applications to entangles proteins}
\author[B. Gabrov\v sek]{Bo\v{s}tjan Gabrov\v sek}
\address[Bo\v{s}tjan Gabrov\v sek]{
Rudolfovo – Science and Technology Centre Novo mesto, Podbreznik 15, 8000 Novo mesto, Slovenia; University of Ljubljana, Faculty of Education, Kardeljeva ploščad 16, 1000 Ljubljana, Slovenia; Institute of Mathematics, Physics and Mechanics, Jadranska ulica 19, 1000 Ljubljana, Slovenia; University of Ljubljana, Faculty of Mechanical Engineering
        Aškerčeva 6, 1000 Ljubljana, Slovenia.}
\email[B.~Gabrov\v sek]{bostjan.gabrovsek@pef.uni-lj.si}
\author[M. Simonič]{Matic Simonič}
\address[Matic Simonič]{
University of Ljubljana, Faculty of Mathematics and Physics, Jadranska ulica 19,
1000 Ljubljana, Slovenia.}
\email[M.~Simonič]{matic.simonic@fmf.uni-lj.si}
\date{\today}
\keywords{Kauffman bracket, skein modules, bonded knots, protein knots}
\subjclass{Primary 57M25; Secondary 05C15, 	92D99}
\begin{document}

\begin{abstract}
We model proteins with intramolecular bonds, such as disulfide bridges, using the concept of bonded knots -- closed loops in three-dimensional space equipped with additional bonds that connect different segments of the knot. We extend the Kauffman bracket polynomial (which is closely related to the Jones polynomial) to bonded knots through the introduction of the bonded version of the Kauffman bracket skein module. We show that this module is infinitely generated and torsion-free for both the rigid and topological case of bonded knots, providing an invariant of such structures.
\end{abstract}

\maketitle

\section{Introduction}

In nature, proteins are complex biological structures composed of long chains of amino acids. These chains are stabilized by intramolecular bonds between specific amino acid residues, which play a crucial role in determining the molecule’s shape and enhancing its stability -- both essential for its biological function \cite{dabrowski2016search, zhao2018stability, sulkowska2008stabilizing}, see \autoref{fig:mamba}. Mathematically, such proteins can be modeled as either open or closed knots with connections between distinct regions of the knot. Such configurations are referred to as \emph{bonded knots}. In this paper we introduce two topological invariants that are capable of distinguishing between different topological types of bonded knots, providing a deeper understanding of their structural and functional roles in biological systems.

\begin{figure}[h]
    \centering
    \includegraphics[width=0.6\textwidth, page=1]{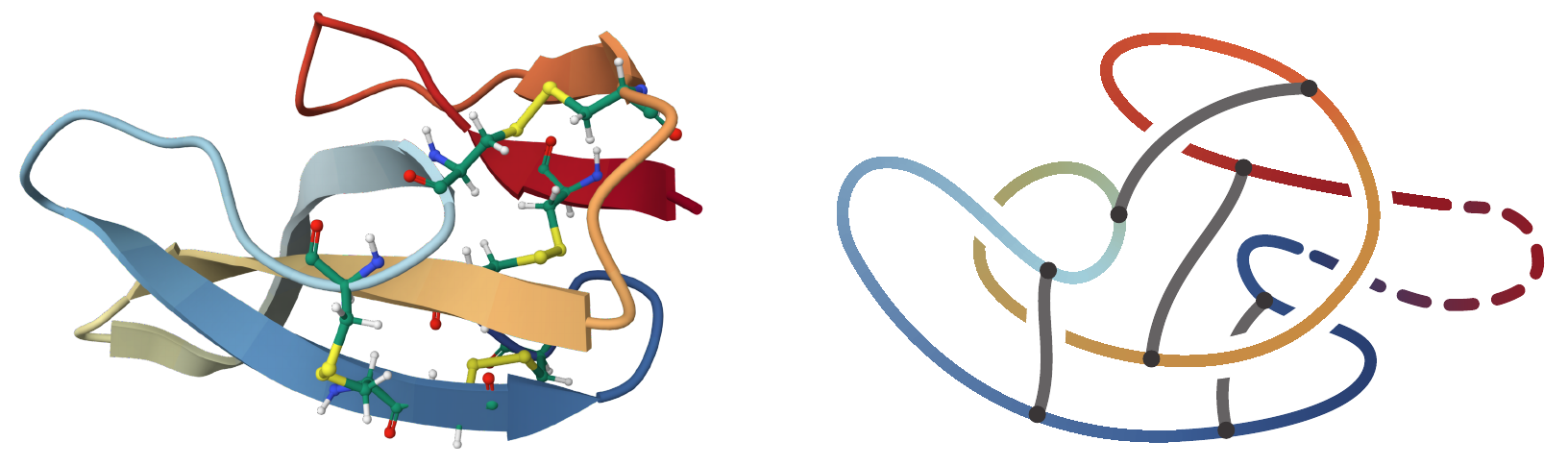} 
    \caption{{\bf The bonded structure of a protein.} Right: the 3D ribbon model of the toxin FS2 isolated from black mamba (Dendroaspis polylepis) venom (PDB 1TFS), left: the associated bonded knot diagram closed with a direct segment. The PDB code refers to the protein structure entry in the Protein Data Bank \cite{burley2019rcsb}.} 
    \label{fig:mamba} 
\end{figure}

From a topological point of view, proteins have so far been extensively studied in numerous settings, such as knots or links \cite{mansfield1994there, virnau2006intricate, dabrowski2017tie, millett2013identifying, dabrowski2019knotprot, wikoff2000topologically, gabrovvsek2023invariants}, slipknots \cite{sulkowska2012conservation, dabrowski2016lassoprot}, cystine knots \cite{simien2021topological, cardoso2019structure, dabrowski2019knotprot} and lassos \cite{dabrowski2016lassoprot, niemyska2016complex} (see \autoref{fig:structures}).
 More recently, seven distinct topological types of $\Theta$-curves, a bonded knot with one bond, were identified in proteins (\autoref{fig:structures}, middle image) \cite{dabrowski2024theta, bruno2024knots}. The study also explored both the folding pathways of these structures and the stabilizing effects provided by the bonds, shedding light on how bonds contribute to protein stability and function.

 To formally analyze bonded topological structures, we introduce in this paper the mathematical theory behind their classification, the normalized bonded bracket polynomial, an invariant derived from the theory of skein modules independently introduced by Turaev  and Przytycki \cite{Turaev1990, Przytycki1991}. 
 Originally developed to extend polynomial knot invariants such as the Jones and HOMFLYPT polynomials to knots and links in arbitrary 3-manifolds, we adopt the theory of skein modules to develop a well-defined invariant for bonded knots.

 Bonded structures, such as bonded knots and bonded knotoids, have been the subject of recent studies, such as \cite{Goundaroulis2017, adams2020knot, gabrovvsek2021invariant, gugumcu2022invariants}, the importance of applying these models to biological contexts is growing and has been investigated in \cite{sulkowska2020folding, dabrowski2024theta}. However, the theory remains complex and has not yet fully addressed the practical requirements posed by biological research, indicating a need for further theoretical development, a gap which we address in this paper.

\begin{figure}[h]\label{fig:str}
    \centering
    \includegraphics[width=0.85\textwidth,page=2]{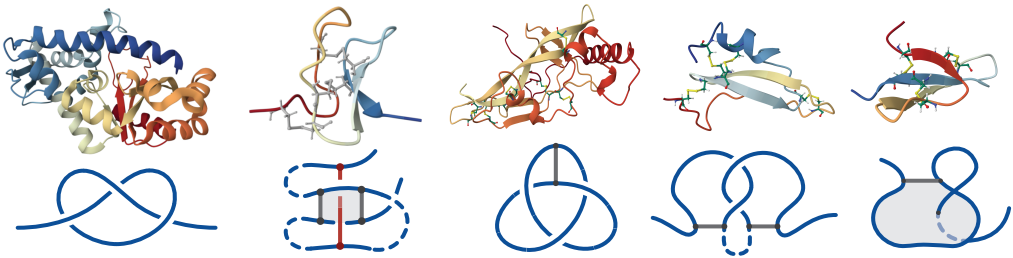} 
    \caption{{\bf Examples of entangles structures in proteins.} From left to right: \emph{an open knot} (PDB 7ECD), \emph{a cystine knot} (PDB 2MXM), where a bond threads through a loop created by the other two bonds, \emph{a deterministic $\theta$-curve} (PDB  1AOC), \emph{a deterministic link} (PDB 2LFK) and \emph{a lasso}, where a tail pierces the minimal surface enclosed with a bond (PDB 1EWS). The last four structures are bonded structures (the protain contains disulfide bridges). Some examples are taken from \cite{sulkowska2020folding}.}
    \label{fig:structures} 
\end{figure}


 The paper is structured as follows. In \autoref{sec:defs} we introduce rigid-vertex, framed and topological bonded links, in \autoref{sec:kbsm} we define the framed bonded Kauffman bracket skein module, show that the module is freely generated by a set of $\Theta$-curves and handcuff links, and introduce the framed bonded bracket polynomial. In section \autoref{sec:topological} we discuss the topological setting and in \autoref{sec:example}
we compute explicit examples. 

\section{Definitions}\label{sec:defs}

A \emph{bonded link} is a pair $(L, \mathbf{b})$, where:
\begin{itemize}
\item $L$ is an oriented link embedded in the 3-sphere $S^3$ (or $\R^3$), that is, a union of a finite number of knots that do not intersect each other:
$$L: S^1 \sqcup S^1 \sqcup \cdots \sqcup S^1 \;\hookrightarrow\; \R^3.$$
\item  $\mathbf{b} = \{b_1, b_2,\ldots,b_n\},n\in \N$, is a set of pairwise disjoint \emph{bonds} (closed intervals) embedded into $S^3$ in such a way, that only the endpoints of $b_i,$ lie in $L$ for all $i\in \{1,2,...,n\}$, i.e. $\partial b_i \subset L$  for all $i\in \{1,2,...,n\}$
(see right-hand side of \autoref{fig:mamba}).
\end{itemize}

The endpoints of bonds form vertices with the link, so bonded links are a special cases of edge-colored trivalent spatial graphs.

A \emph{bonded link diagram} is the regular projection of a bonded link to a plane in such a way that the only type of multiple points are double points intersecting transversely. We call such double points \emph{crossings} of a diagram. A diagram also holds information about over- and under-crossings (see right-hand side of \autoref{fig:mamba}).

Two bonded knots $B_1$ and $B_2$ are isotopic, if there exist an ambient isotopy $H: S^3 \times [0,1] \rightarrow S^3$ that takes $B_1$ into $B_2$, i.e. $H_0 = \text{id}_{S^3}$ and $H_1(B_1) = B_2$. Ambient isotopy is generated by Reidemeister moves as stated in the following theorem.

\begin{theorem}[\cite{Kauffman1989, gabrovvsek2021invariant}]
Two bonded links are ambient isotopic if and only if their diagrams are related by a finite sequence of (Reidemeister) moves I - V depicted in \autoref{fig:reid}.
\end{theorem}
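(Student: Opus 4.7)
The plan is to prove the two directions separately. The \emph{if} direction is routine and geometric: for each of the five Reidemeister moves, I exhibit an explicit ambient isotopy of $S^3$ supported in a small ball containing the part of the diagram where the move is performed. Moves I--III are the classical Reidemeister moves, and the ambient isotopies for them are standard. For moves IV and V (the moves involving a bond), I would verify that a small neighborhood of the bond can be trivialized in a standard ``pillow'' inside which one can rigidly slide or rotate a strand past the bond endpoint, respectively rotate the bond itself. Since the isotopy is supported in a ball and fixed on its boundary, it extends by the identity to an ambient isotopy of $S^3$.

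The \emph{only if} direction is the substantive part. My plan is to reduce to the PL setting and use general position. Fix a generic projection $\pi \colon S^3 \setminus \{\mathrm{pt}\} \to \R^2$; by a small perturbation any bonded link has a regular diagram (double points transverse, no triple points, no tangencies, no bond endpoint coinciding with a crossing, and bonds in general position with respect to $\pi$). Given an ambient isotopy $H$ between two bonded links $B_0$ and $B_1$, I would approximate $H$ by a PL isotopy and consider the one-parameter family of projections $\pi \circ H_t$. Outside a finite set of critical times $t_1 < \dots < t_m$, the projection is regular and the diagram varies by planar isotopy. At each critical time, exactly one elementary singularity occurs, and the diagram changes by a single \emph{local} move.

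The heart of the argument is then the classification of these elementary singularities. Away from the bonds, the only singularities are the three classical ones (cusp, tangency of two strands, triple point), giving moves I, II, III. Near a bond endpoint (a trivalent vertex), new singularities arise: a strand can pass over or under the vertex (giving move IV, up to over/under variants which must be shown to follow from IV together with II and III), and the bond itself can undergo a planar move relative to the incident strands (giving move V). The main obstacle is bookkeeping: one must enumerate \emph{all} local configurations at a vertex crossing event, check that each is generated by moves IV and V together with I--III, and verify that over/under and orientation variants do not require additional moves. This is precisely the content of the cited results of Kauffman \cite{Kauffman1989} for rigid-vertex graphs and its extension in \cite{gabrovvsek2021invariant} to bonded links, so in the write-up I would indicate the reduction to these results and invoke them to conclude.
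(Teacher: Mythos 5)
The paper does not prove this theorem at all: it is imported verbatim from the cited sources (Kauffman's Reidemeister-type theorem for graphs with rigid/topological vertices and its bonded-link extension), and your outline is precisely the standard general-position argument underlying those references, ending, as the paper does, by invoking them for the decisive enumeration of vertex singularities. So your proposal is consistent with the paper's treatment and contains no misstep, though the substantive combinatorial content remains delegated to \cite{Kauffman1989, gabrovvsek2021invariant} in both cases.
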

\begin{figure}[ht]
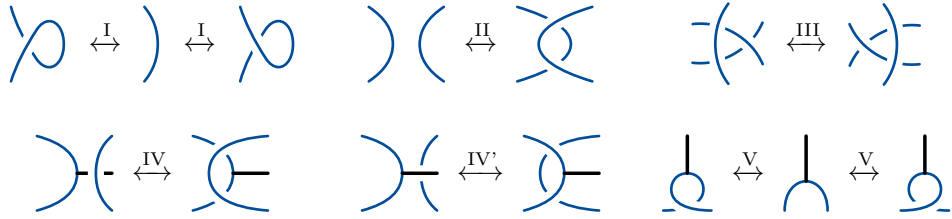

\begin{tabular}{ccccc}
$\iconreid{49}\reid{I}\iconreid{1}\reid{I}\iconreid{2}$ &\quad&
$\iconreid{3}\reid{II}\iconreid{4}$ &\quad&
$\iconreid{5}\reid{III}\iconreid{6}$ \\[2em]
$\iconreid{7}\reid{IV}\iconreid{8}$ &&
$\iconreid{9}\reid{IV'}\iconreid{54}$ &&
$\iconreid{55} \reid{V}\iconreid{56} \reid{V}\iconreid{57}$
\end{tabular}
  \caption{Reidemesiter moves for bonded links. Although not indicated in the figures, arcs in moves I, II, III and the free arcs in IV and IV' can be either link arcs or (colored) bonds.} \label{fig:reidemeister}
  \label{fig:reid}
\end{figure}

In this setting, we say that bonded links contain \emph{topological vertices}, however, if we replace the move V with the move RV in \autoref{fig:rigid}, we obtain so called bonded links with \emph{rigid vertices}. The physical interpretation of a rigid vertex is a disk with fixed strands emanating from its boundary.

\begin{figure}[ht]
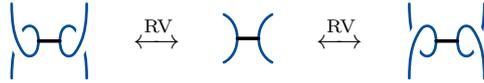

$$ \iconreid{14}\quad\reid{RV}\quad\rotatebox{90}{\iconreid{13}}\quad\reid{RV}\quad\iconreid{15}$$
  \caption{The move RV (a rigid-vertex versions of the move V).} \label{fig:rigid}
\end{figure}
\begin{definition}
Two rigid-vertex bonded links are isotopic if their diagrams are related by a finite sequence of (Reidemeister) moves I -- IV and RV.
\end{definition}

In literature also the following move can be found in the set of Reidemeister moves for bonded structures:
\begin{equation}\label{eq:7}
\raisebox{-1.0em}{\includegraphics[scale=0.8,page=26]{pics/kbsm.pdf}}
\;\leftrightarrow\;
\raisebox{-1.0em}{\includegraphics[scale=0.8,page=20]{pics/kbsm.pdf}}
\;\leftrightarrow\;
\raisebox{-1.0em}{\includegraphics[scale=0.8,page=21]{pics/kbsm.pdf}}
\end{equation}
We show here that this move is in fact redundant, since it can be expressed as a sequence of moves I--RV:
\begin{align*}
\raisebox{-2.5em}{\includegraphics[scale=0.8,page=27]{pics/kbsm.pdf}}
&\xrightarrow{\text{IV, II}}
\raisebox{-2.5em}{\includegraphics[scale=0.8,page=22]{pics/kbsm.pdf}}
\xrightarrow{\text{IV, I}}
\raisebox{-2.5em}{\includegraphics[scale=0.8,page=23]{pics/kbsm.pdf}}\\[-0.5em]
&\xrightarrow{\text{IV, II}}
\raisebox{-2.5em}{\includegraphics[scale=0.8,page=24]{pics/kbsm.pdf}}
\xrightarrow{\text{IV, I}}
\raisebox{-2.5em}{\includegraphics[scale=0.8,page=25]{pics/kbsm.pdf}}
\xrightarrow{\text{RV}}
\raisebox{-2.5em}{\includegraphics[scale=0.8,page=28]{pics/kbsm.pdf}}.
\end{align*}
A \emph{framed link} link is the embedding of a finite number of ribbons (annuli) into $S^3$, 
$$L^\text{fr}: (S^1\!\times I) \;\sqcup\; (S^1\!\times I) \;\sqcup\; \cdots \;\sqcup\; (S^1\!\times I) \; \hookrightarrow\; S^3.$$ 
A \emph{framed bonded link} is obtained by adding $n$ pairwise disjoint bonds, which are strips homeomorphic to $I\times I$ into $S^3$ in such a way that $\partial I \times I$ lie on $S^1 \times \partial I$. This local glueing looks as the \qq{T} shape depicted on the left-hand side of Figure~\ref{fig:T}. Framed bonded links are special cases of framed spatial graphs studied in \cite{Bao20}.


\begin{figure}[hbt!]
    \includegraphics[scale=1,page=1]{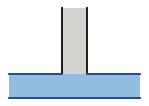}\qquad\qquad
    \includegraphics[scale=1,page=2]{pics/fr.pdf}\qquad
    \includegraphics[scale=1,page=3]{pics/fr.pdf}
\caption{Left: a rigid vertex of a framed bonded link, right: framing markers.}\label{fig:T}
\end{figure}

A \emph{diagram of a framed bonded link} $L$ is the diagram of underlying bonded link (the deformation retract of $L$) together with markers $\raisebox{-0.1em}{\includegraphics[page=83]{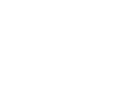}}$ or $\raisebox{-0.1em}{\includegraphics[page=82]{pics/proofs.pdf}}$ representing positive and negative half twists relative to the blackboard framing, respectively, as depicted on the left-hand side of \autoref{fig:T}.

Two framed bonded links are equivalent if they are ambient isotopic. The following theorem connects equivalence in terms of framed bonded diagrams.

\begin{theorem}[\cite{viro2007quantum, Bao20}]
Two framed bonded links are equivalent if and only if they are connected through a finite sequence of (Reidemeister) moves F0 -- FV depicted in \autoref{fig:frreid}.
\end{theorem}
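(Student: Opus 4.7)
The plan is to establish both directions separately: the \emph{soundness} direction (each of F0--FV preserves the framed link type) is a local geometric verification, while the \emph{completeness} direction (ambient isotopic framed diagrams are related by a finite sequence of moves) reduces to the unframed Reidemeister theorem stated above, supplemented by a bookkeeping argument for the twist markers.

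For soundness, I would verify that each of the moves F0--FV corresponds to an explicit local ambient isotopy of the ribbon and strip neighborhood in $S^3$. The move F0 realizes the cancellation of two consecutive opposite half-twists, FI encodes the fact that a kink in a diagram and a half-twist marker produce ribbons with equal framing, and the moves FII, FIII, FIV, FV are the ribbon-theoretic analogues of the corresponding unframed moves II, III, IV, V, none of which change the blackboard framing. Each verification is a standard local picture check against the ribbon structure depicted in \autoref{fig:T}.

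For completeness, given two ambient isotopic framed bonded diagrams $D_1^\text{fr}$ and $D_2^\text{fr}$, I would first forget the twist markers to obtain underlying bonded diagrams $D_1$ and $D_2$; these represent ambient isotopic bonded links, so by the earlier theorem they are connected by a finite sequence of moves I--V. I would then lift this sequence step-by-step to the framed setting, replacing each instance of move I by its framed counterpart FI (which accounts for the writhe change by producing a twist marker) and lifting the remaining moves II--V directly. The resulting framed diagram agrees with $D_2^\text{fr}$ on the underlying link but may differ in the distribution of twist markers. The main obstacle I expect is reconciling these twist configurations: since the total framing of each component is an isotopy invariant, the two distributions carry the same net framing, and one must show they are interconvertible by sliding and cancelling markers in pairs, all of which are instances of F0. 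This final bookkeeping step is handled cleanly by invoking the general Reidemeister theorem for framed trivalent spatial graphs developed in \cite{Bao20}, into which framed bonded links embed as a special case, so the entire argument proceeds by combining the unframed theorem of the previous section with this framed spatial-graph result specialized to our setting.
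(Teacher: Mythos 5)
The paper does not prove this theorem itself; it is quoted from \cite{viro2007quantum, Bao20}, i.e.\ from the Reidemeister theorem for framed trivalent spatial graphs, and your argument, after the local soundness check and the marker-bookkeeping sketch, ultimately defers to exactly that cited result specialized to bonded links, so it is essentially the same approach. One small caution if you ever make the bookkeeping self-contained: redistributing half-twist markers also requires sliding them along edges and past vertices, which uses F0$'$ and FV, not cancellation by F0 alone, and for spatial graphs the per-edge framing is not individually invariant (FV moves twists across a vertex), so the "same net framing" argument must be phrased per component in the sense of \cite{Bao20}.
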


\begin{figure}[ht]
$$\includegraphics[page=129]{pics/proofs.pdf}\reidf{F0}\includegraphics[page=58]{pics/proofs.pdf}\reidf{F0}\includegraphics[page=59]{pics/proofs.pdf} 
\hspace{4em}
\includegraphics[page=60]{pics/proofs.pdf}\reidf{$\text{F0}'$}\includegraphics[page=61]{pics/proofs.pdf}
\hspace{4em}
\includegraphics[page=63]{pics/proofs.pdf} \reidf{FI}\includegraphics[page=58]{pics/proofs.pdf} \reidf{FI} \includegraphics[page=64]{pics/proofs.pdf}
$$
\vspace{0.0em}
$$\includegraphics[page=66]{pics/proofs.pdf}\;\; \reidf{FII} \includegraphics[page=67]{pics/proofs.pdf}
\hspace{6em}
\includegraphics[page=68]{pics/proofs.pdf} \;\;\reidf{FIII}\;\; \includegraphics[page=69]{pics/proofs.pdf}
$$
\vspace{0.0em}
$$
\includegraphics[page=72]{pics/proofs.pdf} \reidf{FIV} \includegraphics[page=73]{pics/proofs.pdf} 
\hspace{2em} 
\includegraphics[page=70]{pics/proofs.pdf} \reidf{$\text{FIV}'$} \includegraphics[page=71]{pics/proofs.pdf}  
\hspace{2em}
\includegraphics[page=111]{pics/proofs.pdf} \reidf{$\text{FIV}''$} \includegraphics[page=112]{pics/proofs.pdf} 
\hspace{2em}
\includegraphics[page=113]{pics/proofs.pdf} \reidf{$\text{FIV}'''$} \includegraphics[page=114]{pics/proofs.pdf}
$$
\vspace{0.0em}
$$
\includegraphics[page=75]{pics/proofs.pdf}  \reidf{FV} \includegraphics[page=74]{pics/proofs.pdf} \reidf{FV} \includegraphics[page=76]{pics/proofs.pdf}
$$ 
  \caption{Reidemeister moves for bonded framed links. Although not indicated in the figures, arcs in moves F0, FI, FII, FIII can be either link arcs or (colored) bonds.} \label{fig:frreid}
\end{figure}

If we want to keep the framing of the bond intact, we can replace the move FV with the move FRV depicted in \autoref{fig:framedRV}. 

\begin{figure}[ht]
$$
\raisebox{-1.2em}{\includegraphics[page=85]{pics/proofs.pdf}}
\;\raisebox{0em}{\reid{RV}}\;
\raisebox{-0.5em}{\includegraphics[page=56]{pics/proofs.pdf}}
\;\raisebox{0em}{\reid{RV}}\;
\raisebox{-1.0em}{\includegraphics[page=86]{pics/proofs.pdf}}
$$
  \caption{The move RV is composed of two V moves from \autoref{fig:frreid} performed consecutively in such a way that framing on the bond remains intact.} \label{fig:framedRV}
\end{figure}

By convention, we consider only framed bonded knot diagrams without twists on the bonds (i.e., bonds have blackboard framing), as any twist can always be removed from the bonds using the FV move.

\section{The bonded Kauffman bracket skein module}\label{sec:kbsm}

Before defining the Kauffman bracket skein module (KBSM), let us first recall the definition of the classical (Kauffman) bracket polynomial for framed links in $S^3$ \cite{kauffman1987state}.
\begin{definition}\label{def:bracket}
    Let $L$ be a diagram of a framed link in $S^3$. The \emph{bracket polynomial} $\langle L \rangle$ is a Laurent polynomial in variable $A$ defined by the following three rules:
    \begin{equation} \label{eq:skein123}
    \begin{tabular}{lllr}
         1. && $\Big\langle \nevozel \Big\rangle = 1$ & \\[0.8em]
         2. && $\Big\langle\icon{10}\Big\rangle = A \Big\langle\icon{11}\Big\rangle + A^{-1} \Big\langle\icon{12}\Big\rangle$ & (skein relation) \\[0.8em]
         3. && $\Big\langle \nevozel \sqcup L \Big\rangle = (-A^2-A^{-2})\,L$ &  (framing relation)
    \end{tabular}
    \end{equation}
\end{definition}

To define the bracket polynomial for unframed links, we must normalize it by orienting the diagram and multiplying the polynomial by $(-A^3)^{-w(L)}$, where $w(L)$ is the writhe of the diagram $L$ defined as the sum of crossing signs of the diagram from \autoref{fig:sign}. The normalized bracket polynomial is thus 
$$X(L) = (-A^3)^{-w(L)} \langle L \rangle.$$
By substituting $A=-t^{-1/4}$, we obtain the Jones polynomial $V(L) = X(L)(t^{-1/4})$ \cite{kauffman1987state, mroczkowski2022infinitely}.

 \begin{figure}[ht]
 \begin{tabular}{ccc}
     \includegraphics[page=125]{pics/proofs.pdf} & \qquad\qquad &  \includegraphics[page=126]{pics/proofs.pdf} \\
     $+1$ & & $-1$ 
 \end{tabular}
   \caption{The sign of a crossing of an oriented diagram is determined by the right-hand rule.} \label{fig:sign}
 \end{figure}

Using the skein and framing relation \eqref{eq:skein123}, we can simplify two consecutive markers, or equivalently, a positive and negative \emph{kink} (twist) in the following way:
\begin{equation}\label{eq:kinkA}
    \raisebox{-1.2em}{\includegraphics[page=127]{pics/proofs.pdf}} =
    \raisebox{-1.2em}{\includegraphics[page=123]{pics/proofs.pdf}} 
    = -A^3 \raisebox{-1.2em}{\includegraphics[page=124]{pics/proofs.pdf}} \qquad \text{and} \qquad
        \raisebox{-1.2em}{\includegraphics[page=128]{pics/proofs.pdf}} =
        \raisebox{-1.2em}{\includegraphics[page=122]{pics/proofs.pdf}} 
    = -A^3 \raisebox{-1.2em}{\includegraphics[page=124]{pics/proofs.pdf}}\!\!\!\!\!\!\!.
\end{equation}

Turaev and Przytycki \cite{Turaev1990, Przytycki1991} generalized the bracket polynomial to the Kauffman bracket skein module for links in an arbitrary 3-manifold $M$. However, the structure of this module depends on  $M$; it can be infinitely generated and may contain torsion \cite{gabrovvsek2018kbsm, gabrovvsek2017tabulation, gabrovvsek2014homflypt, gabrovvsek2013categorification}.

We now generalize the bracket polynomial, using the algebraic setting of \cite{Turaev1990, Przytycki1991}, to define it for framed bonded links. 

\begin{definition}
Let $R$ be an integral domain, where $A$, $1+A^4$, and $1+A^4+A^8$ are invertible in $R$.
Let $\linksfr$ be a set of rigid-vertex framed bonded links and let $R\linksfr$ be the free $R$-module spanned by $\linksfr$. Let $\submodule$ be the submodule of $R\linksfr$ generated by skein expressions  (cf.~\eqref{eq:skein123})
\begin{equation}\label{eq:skein}
\icon{10} - A \icon{11} - A^{-1} \icon{12} \qquad \text{and} \qquad L \sqcup \nevozel - \Kauff L.
\end{equation}
We define the framed bonded Kauffman bracket skein module as the quotient module $$\kbsmfr = R\linksfr \,/\,\submodule.$$
\end{definition}



After removing all crossings and trivial components via (\ref{eq:skein}), the remaining expression consists of $\Theta$-curves and handcuff links. Let $\Theta$ and $H$ be the following \emph{elementary framed bonded knots} with zero blackboard framing:
\begin{equation}\label{eq:elementary}
\Theta = \raisebox{-2.1em}{\begin{overpic}[page=22]{icons}
\end{overpic}}
\quad\qquad  \text{and} \quad\qquad
H = \;\raisebox{-2.1em}{\begin{overpic}[page=21]{icons}.
\end{overpic}}
\end{equation}
Let $\basisfr$ be the set of all finite unordered products of $\Theta$ and $H$,

$$ \basisfr 
= \Big \{
 \Theta^{m} H^{n} \mid  m, n \in \mathbb N_0
\Big\},
$$
where $\Theta^{0} H^{0} = \emptyset$. Here the multiplication operation is the disjoint sum and is clearly commutative.

\begin{theorem}\label{thm:main}
The module $\kbsmfr$ is freely generated by $\basisfr$.
\end{theorem}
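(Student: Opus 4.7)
The plan is to prove the two standard halves: that $\basisfr$ spans $\kbsmfr$ and that $\basisfr$ is $R$-linearly independent in $\kbsmfr$.

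For \emph{spanning}, I would start from an arbitrary framed bonded link diagram $D$ and reduce it, modulo $\submodule$, to an $R$-linear combination of elements of $\basisfr$. The first step is to apply the skein relation from \eqref{eq:skein} at every crossing in succession, writing $D$ as an $R$-linear combination of crossingless framed bonded diagrams. The second step is to use the framing relation from \eqref{eq:skein} to absorb every disjoint unknot into its coefficient $(-A^2-A^{-2})$. The output is an $R$-linear combination of crossingless diagrams, none of whose connected components is a trivial unknot. The third and main step is to reduce these remaining diagrams to $\basisfr$: a connected component with exactly one bond is, up to ambient isotopy in $S^3$, a $\Theta$-curve or a handcuff, while a connected component carrying two or more bonds -- for example, an unknotted circle with two bonds on it -- must be split further. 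I would do this by introducing, via a Reidemeister-II move, an auxiliary pair of crossings between two parallel link strands adjacent to a bond, resolving them by the skein relation, and applying a Jones--Wenzl fusion of type $p_2$ (or $p_3$ for a three-strand configuration) to rewrite the resulting combination as an $R$-linear combination of disjoint unions of $\Theta$'s and $H$'s. The invertibility hypotheses on $1+A^4$ and $1+A^4+A^8$ appear precisely at this step: they encode the invertibility of $[2]=A^2+A^{-2}$ and $[3]=A^4+1+A^{-4}$, which are the denominators of $p_2$ and $p_3$. Iterating this step on every multi-bonded connected component produces a finite expansion in $\basisfr$.

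For \emph{linear independence}, the plan is to construct an $R$-module homomorphism $\bar\phi:\kbsmfr\to F$, where $F$ is the free $R$-module on the symbols $\{\Theta^m H^n\}$, sending the class of each $\Theta^m H^n$ to itself. Concretely, I would define a map $\phi:R\linksfr\to F$ on diagrams by the very reduction procedure above. Two things then need to be verified: (i) that $\phi(D)$ is independent of the auxiliary choices made during the reduction (order of resolved crossings, choice of Reidemeister-II pair, etc.), and (ii) that $\phi$ is invariant under the framed bonded Reidemeister moves F0, FI, FII, FIII, FIV and FV (or FRV in the rigid-vertex framed setting). Once (i) and (ii) hold, $\phi$ descends to $\bar\phi:\kbsmfr\to F$, and $R$-linear independence of $\basisfr$ is immediate since its image $\{\Theta^m H^n\}$ is an $R$-basis of $F$.

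The main obstacle I anticipate is the multi-bond reduction in the spanning part. Producing the precise local skein identities that express a bonded connected component carrying $k\geq 2$ bonds as an $R$-linear combination of disjoint unions of $\Theta$'s and $H$'s -- and showing that only $(1+A^4)^{-1}$ and $(1+A^4+A^8)^{-1}$ are needed as extra denominators -- is the nontrivial technical heart of the argument. Once these identities are in hand, termination of the reduction and verification of Reidemeister invariance for $\bar\phi$ are essentially bookkeeping, although some care is needed for the bond-sliding moves FIV, FIV', FIV'', FIV''' and FV, since the fusion step interacts with the positions of bond endpoints.
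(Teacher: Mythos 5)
Your overall two-part plan (spanning plus linear independence) matches the structure of the paper's argument, but in both halves the step you defer is precisely the mathematical content of the theorem, so as it stands the proposal has a genuine gap rather than a complete alternative proof. For spanning, the ``nontrivial technical heart'' you flag --- a local identity expressing a bond attached to two strands as an $R[\Theta,H]$-combination of the two planar smoothings --- is exactly what the paper proves, and it is not obtained by invoking Jones--Wenzl projectors as a black box: the paper pushes the arc adjacent to the bond once over and once under the rest of the diagram, expands both expressions by the skein relation (equations \eqref{eq:anti1} and \eqref{eq:anti2}), and solves the resulting linear system; the factor $1+A^4+A^8$ is the determinant-type factor of that system (essentially $A^{-4}+1+A^4$ up to a unit), while the factor $1+A^4$ does not come from a three-strand projector at all but from Lemma~\ref{lema}, which splits the connected sums $\Theta\,\#\,L$ and $H\,\#\,L$ at the cost of $(-A^2-A^{-2})^{-1}$. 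So your numerological identification of the denominators with $[2]$ and $[3]$ is suggestive but your proposed mechanism (``$p_3$ for a three-strand configuration'') does not correspond to any step that actually occurs, and no worked local identity is supplied. A smaller issue in the same half: the skein relation only resolves crossings between link arcs, so crossings where a bond passes over or under a strand must be removed by the moves IV, IV$'$ (as the paper does before isolating the bond), not by smoothing.

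For linear independence your route is genuinely different from the paper's, but it is the harder one and you have not carried it out: defining $\phi$ on diagrams by the reduction algorithm and then proving independence of all choices and invariance under F0--FV (including the bond moves) is at least as difficult as the theorem itself --- this is the classical Kauffman-style well-definedness argument, and in the bonded setting the interaction of the fusion step with moves IV and RV is exactly where it could fail; calling it ``essentially bookkeeping'' while the fusion identity itself is still unproven means the whole second half rests on the unestablished first half. The paper avoids this entirely with a cleaner device: the bond-deletion maps $g^{b}_0,g^{b}_\infty:\linksfr_n\to\linksfr_{n-1}$, whose well-definedness under the Reidemeister moves is a short local check, induce maps on $\kbsmfr$ sending $\Theta\mapsto\unknot$ or $(-A^2-A^{-2})\unknot$ and $H\mapsto(-A^2-A^{-2})\unknot$ or $\unknot$; applying both maps to a putative dependence and using that the $2\times 2$ system has nonzero determinant $(-A^2-A^{-2})^2-1$ gives an induction on the number of bonds, with base case the known freeness (torsion-freeness) of the Kauffman bracket skein module of $S^3$. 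If you want to salvage your plan, the most economical fix is to prove the local bond-expansion identity \eqref{eq:anti4} explicitly (your fusion idea can be made rigorous by expanding the bond tangle against the Temperley--Lieb basis, but you must actually compute the coefficients) and then replace your confluence argument for independence with deletion-type maps as above.
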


In  order to prove the theorem, we need to show that $\basisfr$ is the generating set of $\kbsmfr$, which we show in \autoref{sec:gen} and that $\basisfr$ is linearly independent, which we show in \autoref{sec:free}.

We will prove the theorem for any number of bonds, $n \in \mathbb{N}$, so let us introduce a grading on the module. Let $\linksfr_n$ be the set of framed links with $n$ bonds. The set of framed links is
$$\linksfr = \bigcup_{n=0}^{\infty} \linksfr_n.$$
This grading induces a natural grading on the skein module:
$$
\kbsmfr = \bigoplus_{n=0}^{\infty} \kbsmfr_n,
$$
where $\kbsmfr_n$ consists of classes of framed links with $n$ bonds.
For convenience, we also introduce a grading on the the generating set of $\kbsmfr_n$:
\begin{align*}
\basisfr_n
&= \Big \{
 \Theta^{i} H^{n-i} \mid  i \in \{0,1,2,...,n\} 
\Big\} 
  \\
&= \Big \{ H^{n},\Theta^{1} H^{n-1}, \Theta^{2} H^{n-2},\cdots,\Theta^{n-2} H^{2}, \Theta^{n-1} H^{1}, \Theta^{n} \Big\}  
\end{align*}
Clearly,
$$\basisfr = \bigcup_{n=0}^{\infty} \basisfr_n.$$




\begin{lemma}\label{lema}
Let $\Theta \,\#\,L$ be a connected sum of $\Theta$ and a bonded link $L$ and let $H \,\#\, L$ be a connected sum of $H$ and $L$. In $\kbsmfr$ it holds
\begin{equation} \label{eq:sum}
\Theta \, \# \, L = \frac{1}{-A^2-A^{-2}}\,\Theta\, L  \qquad \text{and} \qquad
    H \, \# \, L = \frac{1}{-A^2-A^{-2}}\,H\, L.
\end{equation}
\end{lemma}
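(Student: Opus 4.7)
The plan is to apply the skein relation \eqref{eq:skein} at the connect-sum site and then compute the ``crossing resolution'' by a framing argument, exploiting that the underlying link component of $\Theta$ (resp.\ the chosen circle of $H$) is an unknot.

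First I would fix a diagram of $\Theta \# L$ in which a small ball $B$ isolates the connect-sum site: inside $B$, the two band arcs of $\Theta \# L$ form a ``horizontal'' pair of arcs, whereas the corresponding diagram for $\Theta \sqcup L = \Theta\, L$ has two ``vertical'' arcs inside $B$. Let $C$ be the diagram obtained by replacing the contents of $B$ with a single positive crossing. Applying \eqref{eq:skein} at this crossing gives
\begin{equation*}
C = A\,(\Theta \sqcup L) + A^{-1}\,(\Theta \# L).
\end{equation*}

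The key step is to evaluate $C$ independently, namely to show $C = -A^3\,(\Theta \# L)$. Because the underlying circle of $\Theta$ is an unknot, I can isotope the part of $\Theta$'s circle lying outside $B$ into a tiny neighborhood just to the left of $B$. Tracing the merged loop carefully, the two strands of the crossing in $B$ then close up on the left through this short arc, producing a self-intersecting small loop which is precisely an R1 kink on the merged link component. The bond of $\Theta$, whose endpoints have been dragged into the same small region by the shrinking, can then be slid along the merged component (via moves V or RV from Figures~\ref{fig:reid}--\ref{fig:rigid}, or their framed analogs) until both endpoints lie on the ``$L$-side'', clear of the kink. With no bond attached to the kink, the framed R1 relation \eqref{eq:kinkA} applies and gives $C = -A^3\,(\Theta \# L)$.

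Combining the two expressions for $C$ one gets $-A^3(\Theta \# L) = A(\Theta \sqcup L) + A^{-1}(\Theta \# L)$, which rearranges to
\begin{equation*}
\Theta \# L = \frac{A}{-(A^3 + A^{-1})}\,\Theta\, L = \frac{1}{-A^2 - A^{-2}}\,\Theta\, L,
\end{equation*}
using the identity $\frac{A}{-(A^3+A^{-1})} = \frac{1}{-A^2-A^{-2}}$. The case of $H \# L$ is identical in structure: the circle of $H$ on which the connect sum is performed is an unknot and can be shrunk in the same way, and the bond of $H$ (with only one endpoint on the chosen circle) can likewise be slid off the R1 kink before applying \eqref{eq:kinkA}. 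The main obstacle will be rigorously justifying the shrink-and-slide isotopy in the key step---checking that retracting the chosen circle into a tiny disk and subsequently sliding the bond endpoint(s) off the resulting kink region are realisable through framed bonded Reidemeister moves---but this reduction ultimately depends only on the unknottedness of the component being connect-summed.
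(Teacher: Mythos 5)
Your argument is correct and follows essentially the same route as the paper's proof: both evaluate the same one-crossing diagram at the connect-sum neck in two ways --- once via the kink relation \eqref{eq:kinkA} (the paper cites RV, I and \eqref{eq:kinkA}; you argue it by shrinking the unknotted circle of $\Theta$ and clearing the bond from the kink) and once via the skein resolution \eqref{eq:skein} --- and then solve the resulting linear equation. The only small inaccuracy is that sliding the bond clear of the kink region is realized by the IV-type (strand-past-vertex) moves, possibly combined with RV, rather than by moves V/RV alone; this does not affect the correctness of the reduction.
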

\begin{proof}
    $$
    -A^3 \, 
    \raisebox{-0.9em}{\includegraphics[page=60]{pics/diag.pdf}} \overset{RV, I, \eqref{eq:kinkA}}{=}
    \raisebox{-0.9em}{\includegraphics[page=61]{pics/diag.pdf}} \overset{\eqref{eq:skein}}{=}
    A \, \raisebox{-0.9em}{\includegraphics[page=62]{pics/diag.pdf}} + 
    A^{-1}\,\raisebox{-0.9em}{\includegraphics[page=60]{pics/diag.pdf}} 
    $$
yields    $$
    \raisebox{-0.9em}{\includegraphics[page=60]{pics/diag.pdf}} 
    = \frac{1}{-A^2-A^{-2}}\,\raisebox{-0.9em}{\includegraphics[page=62]{pics/diag.pdf}}.
    $$
    By an analogous argument
    $$
    \raisebox{-0.9em}{\includegraphics[page=63]{pics/diag.pdf}} 
    = \frac{1}{-A^2-A^{-2}}\,\raisebox{-0.9em}{\includegraphics[page=64]{pics/diag.pdf}}. 
    $$
\end{proof}

\subsection{The generating set}\label{sec:gen}



First, we show that for a framed bonded knot, a bond can be separated using the relators \eqref{eq:skein}. To achieve this, we first isolate the bond by pushing the intersecting arcs aside using moves IV and IV’. Next, we reposition the bond to one side of the diagram using Reidemeister moves, as illustrated in the first diagram of \eqref{eq:anti1}. 

We push the bottom arc over the remaining arcs and compute:
\begin{equation} \label{eq:anti1}
\begin{split}
\Q{1} & \isoteq -A^3 \Q{2}  =  -A^3 \Bigg( A\Q{3} + A^{-1}\Q{4}  \Bigg) \\
& = -A^4 \Bigg( A\Q{5} + A^{-1}\Q{6}  \Bigg) - A^2 \Q{9}\\
& =  -A^5 \Bigg( A \Q{7} + A^{-1}\Q{8}  \Bigg) - A^3 \Bigg(  A\Q{54} + A^{-1}\Q{55}  \Bigg)- A^2 \Q{9}\\
& = -A^6 \Q{10} - A^4 \Q{13} - A^4 \Q{23} - A^2 \Q{11} - A^2 \Q{9}.
\end{split}
\end{equation}
Similarly, we push the bottom arc under the rest of the arcs and compute:
\begin{equation} \label{eq:anti2}
\begin{split}
\Q{1} & \isoteq -A^{-3} \Q{15}  =  -A^{-3} \Bigg( A\Q{17} + A^{-1}\Q{16}  \Bigg) \\
& = -A^{-2} \Q{9}   -A^{-4}\Bigg( A\Q{19} + A^{-1}\Q{18}  \Bigg) \\
& = -A^{-2} \Q{9}   -A^{-3}\Bigg( A\Q{55} + A^{-1}\Q{54}  \Bigg) -A^{-5}\Bigg( A\Q{8} + A^{-1}\Q{7}  \Bigg) \\
& = -A^{-2} \Q{9}   -A^{-2}\Q{11} -A^{-4} \Q{23} - A^{-4} \Q{13} - A^{-6} \Q{10}.
\end{split}
\end{equation}
We solve the equation for the non-composite terms and obtain the solution
\begin{equation*} 
\begin{split}
(1+A^4+A^8)\Q{1} & =- A^4  \Q{13}  - A^4 \Q{23} - (A^6+A^2) \Q{11}  - (A^6 + A^2) \Q{9} \\
(1+A^4+A^8)\Q{10} & =- (A^6+A^2) \Q{13}  - (A^6+A^2) \Q{23} - A^4 \Q{11}  - A^4 \Q{9}.
\end{split}
\end{equation*}
Applying Lemma \ref{lema} on the right-hand side terms and multiplying by $-A^{-2}$, we obtain
\begin{equation} \label{eq:anti4}
\begin{split}
(1+A^4)(1+A^4+A^8)\Q{1} & = A^6 \Theta \Q{21}  + A^6 H \Q{20} + (A^4+A^8) H \Q{21} + (A^4+ A^8) \Theta \Q{20} \\
(1+A^4)(1+A^4+A^8)\Q{10} & = (A^4+A^8)\Theta  \Q{21}  + (A^4+A^8) H\Q{20} + A^6 H \Q{21} + A^6 \Theta \Q{20}
\end{split}
\end{equation}

By repeating this process, we can express a diagram with $n$ bonds as a sum of terms consisting of trivial $\Theta$-curves and handcuff links. In other words, the set $\basis_n$ is a generating set of $\kbsm_n$.

\subsection{Freeness}\label{sec:free}


To prove that $\kbsmfr_n$ is freely generated by $\basisfr_n$ for every $n\in\N$, we first introduce the following definition and lemma (cf.~\cite{Kauffman1989}).

\begin{definition}
   Let $n\in \N,\;i\in \{1,2,\ldots,n\}$, and let $g^{b_i}_0, g^{b_i}_\infty : \linksfr_{n} \rightarrow \linksfr_{n-1}$ be maps which remove the bond $b_i$ using the following conventions:
$$
g^{b_i}_0 : \icon{13} \mapsto \icon{12} \qquad \text{and} \qquad g^{b_i}_\infty : \icon{13} \mapsto \icon{11}.
$$
\end{definition}
\begin{lemma}
   Maps $g^{b_i}_0, g^{b_i}_\infty : \linksfr_{n} \rightarrow \linksfr_{n-1}$ defined as above are well deined in the sense that if bonded links $L, L' \in \linksfr_n$ are isotopic ($L \sim L'$), it follows that $g^{b_i}_0(L) \sim g^{b_i}_0(L')$ and $g^{b_i}_{\infty}(L) \sim g^{b_i}_{\infty}(L')$.
\end{lemma}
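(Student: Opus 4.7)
The plan is to verify the claim by induction on the length of the Reidemeister sequence connecting $L$ and $L'$, so it suffices to establish the conclusion when $L$ and $L'$ differ by a single move. Concretely, I would show that for every Reidemeister move in the rigid-vertex framed bonded setting (F0, F0', FI, FII, FIII, FIV--FIV''', FRV), if $L \sim L'$ by that move, then $g^{b_i}_\star(L)$ and $g^{b_i}_\star(L')$ are related by a finite sequence of framed bonded Reidemeister moves in $\linksfr_{n-1}$, for each $\star \in \{0,\infty\}$.

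First I would dispatch the easy cases: any move performed in a region of the diagram disjoint from $b_i$, or any move performed on a different bond $b_j$ with $j \neq i$, commutes trivially with $g^{b_i}_\star$ since the smoothing is a strictly local operation at $b_i$, and the same move witnesses $g^{b_i}_\star(L) \sim g^{b_i}_\star(L')$ in $\linksfr_{n-1}$. This handles the bulk of the cases. The substantive cases are those where the move involves $b_i$ itself: a framing move F0 or F0' on the strip $b_i$ becomes, under $g^{b_i}_\star$, a framing move on the pair of link strands produced by the smoothing; a move FIV, FIV', FIV'' or FIV''' sliding a strand across an endpoint of $b_i$ becomes, after smoothing, a slide over a pair of parallel arcs, which is realised by two consecutive FII moves (or FIII moves if a crossing intervenes on the way).

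The main obstacle is the FRV case, in which the rigid vertex of $b_i$ is rotated. Here one must check by direct inspection of the local pictures in Figure~\ref{fig:rigid} (and Figure~\ref{fig:framedRV} for the framed analogue) that both planar reconnections defining $g^{b_i}_0$ and $g^{b_i}_\infty$ are individually symmetric under the prescribed rotation axis, so that the two smoothed diagrams coincide up to planar isotopy together with, if needed, a framing correction via F0. If the rotation were to interchange the two reconnection patterns instead, the lemma would fail for both maps: one would only recover the mixed relation $g^{b_i}_0(L) \sim g^{b_i}_\infty(L')$. With the conventions fixed in the preceding definition this symmetry does hold, and once it is verified the remaining translations of the local move to the smoothed diagram reduce to standard framed bonded Reidemeister moves in $\linksfr_{n-1}$.
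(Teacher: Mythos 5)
Your proposal takes essentially the same route as the paper: reduce to a single Reidemeister move, note that moves not involving $b_i$ (or involving only other bonds) commute trivially with the smoothings, and then verify the IV-type moves and the rigid-vertex move by direct local inspection of the two reconnection patterns. The only imprecision is in the FRV case: after smoothing, the two sides do not coincide up to planar isotopy plus an F0 correction, but differ by a pair of cancelling kinks (for $g^{b_i}_0$) and by marker moves together with an FII move (for $g^{b_i}_\infty$); since these are legitimate framed bonded moves in $\linksfr_{n-1}$, this does not affect the conclusion, and your symmetry observation that the rotation preserves (rather than interchanges) the two smoothing types is exactly the point the paper's pictures verify.
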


 \begin{proof}
Let $L, L'\in \linksfr_{n}$ and $b_i,\; i\in\{1,2,\ldots,n\}$ a bond. We can verify well-definedness for each Reidemeister move separately. Since moves 0, I, II and III do not contain any bonds it is trivial that $g^{b_i}_0(L) \sim g^{b_i}_0(L')$ and $g^{b_i}_{\infty}(L) \sim g^{b_i}_{\infty}(L')$, where $L'$ and $L$ differ by moves 0, I, II and III. \\
Since $g^{b_i}_0$ and $g^{b_i}_{\infty}$ remove the bond $b_i$, while preserving the rest of the bonded link, it suffices to analyze the local structure around the bond for the rest of the moves, IV and V.
We first check that the maps $g^{b_i}_0$ and $g^{b_i}_\infty$ are well-defined under the IV-moves; we only check the move IV'', since all other IV moves follow the same pattern. Applying $g^{b_i}_0$, we have:
$$
g^{b_i}_0\bigg(
\raisebox{-2.1em}{\includegraphics[page=115]{pics/proofs.pdf}}
\bigg) = 
\raisebox{-2.1em}{\includegraphics[page=116]{pics/proofs.pdf}}
\sim 
\raisebox{-2.1em}{\includegraphics[page=119]{pics/proofs.pdf}} =
g^{b_i}_0\bigg(
\raisebox{-2.1em}{\includegraphics[page=117]{pics/proofs.pdf}}
\bigg),
$$

$$
g^{b_i}_0\bigg(
\raisebox{-1.3em}{\includegraphics[page=111]{pics/proofs.pdf}}
\bigg)
= 
\raisebox{-1.3em}{\includegraphics[page=91]{pics/proofs.pdf}} 
\sim 
\raisebox{-1.3em}{\includegraphics[page=92]{pics/proofs.pdf}}
=g^{b_i}_0\bigg(
\raisebox{-1.3em}{\includegraphics[page=112]{pics/proofs.pdf}}
\bigg),
$$

\medskip

\noindent and applying $g^{b_i}_\infty$, we have:
$$
g^{b_i}_\infty\bigg(
\raisebox{-2.1em}{\includegraphics[page=115]{pics/proofs.pdf}}
\bigg) 
= 
\raisebox{-2.1em}{\includegraphics[page=120]{pics/proofs.pdf}} 
\sim 
\raisebox{-2.1em}{\includegraphics[page=121]{pics/proofs.pdf}}
=
g^{b_i}_\infty\bigg(
\raisebox{-2.1em}{\includegraphics[page=117]{pics/proofs.pdf}}
\bigg).
$$
$$
g^{b_i}_\infty\bigg(
\raisebox{-1.3em}{\includegraphics[page=111]{pics/proofs.pdf}}
\bigg) 
= 
\;\raisebox{-1.3em}{\includegraphics[page=102]{pics/proofs.pdf}} \;
\sim 
\;\raisebox{-1.3em}{\includegraphics[page=103]{pics/proofs.pdf}}\;
=
g^{b_i}_\infty\bigg(
\raisebox{-1.3em}{\includegraphics[page=112]{pics/proofs.pdf}}
\bigg).
$$


\medskip

\noindent Next, we check that the $g^{b_i}_0$ is well-defined under the move RV:
$$
g^{b_i}_0\bigg(
\raisebox{-1.15em}{\includegraphics[page=98]{pics/proofs.pdf}}
\bigg) 
= 
\raisebox{-1.15em}{\includegraphics[page=99]{pics/proofs.pdf}} 
\raisebox{-0.6em}{\reid{I,I}}
\raisebox{-1.6em}{\includegraphics[page=100]{pics/proofs.pdf}}
=
g^{b_i}_0\bigg(
\raisebox{-1.6em}{\includegraphics[page=101]{pics/proofs.pdf}}
\bigg),
$$
Applying $g^{b_i}_\infty$, we have:
$$
g^{b_i}_\infty\bigg(
\raisebox{-1.15em}{\includegraphics[page=98]{pics/proofs.pdf}}
\bigg) 
= 
\raisebox{-1.15em}{\includegraphics[page=108]{pics/proofs.pdf}}
\raisebox{-0.6em}{\reid{O,O,II}}
\raisebox{-1.6em}{\includegraphics[page=110]{pics/proofs.pdf}}
=
g^{b_i}_\infty\bigg(
\raisebox{-1.6em}{\includegraphics[page=101]{pics/proofs.pdf}}
\bigg),
$$
Proofs for the mirror versions are analogous.
 \end{proof}


The two maps $g^{b_i}_0$ and $g^{b_i}_\infty$ induce maps $(g^{b_i}_0)^*$ and $(g^{b_i}_\infty)^*$ on the skein module, respectively.
The induced maps map the generators in the following way:

 \begin{center}
 \begin{tabular}{ccc}
   \begin{tabular}{lcll}
$(g^{b_i}_0)^*:$ & \raisebox{0.8em}{\itheta{b_i}} & $\mapsto$ & \!\raisebox{0.8em}{\iunknot} \\
& \raisebox{0.8em}{\iH{b_i}} & $\mapsto$ & $\raisebox{0.8em}{\iunknots} =\Kauff \raisebox{0.8em}{\iunknot}$ \\
\end{tabular}
   & \quad &
   \begin{tabular}{lcll}
$(g^{b_i}_\infty)^*:$ & \raisebox{0.8em}{\itheta{b_i}} & $\mapsto$ & $\raisebox{0.8em}{\iunknots} = \Kauff \raisebox{0.8em}{\iunknot}$ \\
& \raisebox{0.8em}{\iH{b_i}} & $\mapsto$ & \raisebox{0.8em}{\iunknot} \\
\end{tabular}
 \end{tabular}

 \end{center}

\medskip

\noindent We will now show that $\kbsm_n$ is freely generated by $\basis_n$ by induction on $n \in \N$.

\bigskip

\noindent (Base of induction.) First, let us show linear independence of $\basis_1 =\{\Theta, H\}$ in $\kbsm_1$, i.e.
\begin{equation}\label{eq:smallsum}
     p\Theta+qH=0\Longrightarrow q=p=0,    
\end{equation}
    where $p,q\in R$. Let $b_1\in \mathbf{b}$ (elements in $\basis_1$ have only one bond) and let us apply $(g_0^{b_1})^*$ to the sum \eqref{eq:smallsum}:
       $$
       p\unknot+q(-A^2-A^{-2})\unknot=0,
       $$
       where $\unknot$ is the unknot. 
       Applying $(g_{\infty}^{b_1})^*$ to the sum \eqref{eq:smallsum}, we get: 
       $$
       p(-A^2-A^{-2})\unknot+q\unknot=0.
       $$
       We solve this system of equations and obtain
       $$
       q((-A^2-A^{-2})^2-1)\unknot =0, \qquad \text{and}\qquad
       p((-A^2-A^{-2})^2-1)\unknot =0.
       $$
       From the fact that $R$ is an integral domain and $(-A^2-A^{-2})^2-1 \neq 0$, and the fact that the Kauffman bracket skein module of links in $S^3$ (corresponding to the classical bracket polynomial) is torsion-free, it follows that $p=q=0$.
\bigskip

 \noindent (Induction Step.) We now assume that $\kbsm_{n-1}$ is freely generated by $\basis_{n-1}$ and we will show that it follows that $\kbsm_n$ is freely generated by $\basis_n$. 
Given $\basis_n= \{ H^{n},\Theta^{1} H^{n-1}, \Theta^{2} H^{n-2},\ldots, \Theta^{n-1} H^{1}, \Theta^{n} \},$ our goal is to show  
\begin{equation}\label{eq:ksum}
\sum_{i=0}^n p_i\Theta^{i}H^{n-i}=0\quad\Rightarrow \quad p_i=0,
\end{equation}
where $p_i\in R$.
Let $b\in \mathbf{b}$ be a bond, and let's apply $(g_0^{b})^*$ to the sum. For each term in \eqref{eq:ksum} we obtain:
\begin{equation}\label{eq:cases}
(g_0^{b})^*(p_i\Theta^iH^{n-i})
\begin{cases}
    p_i\Theta^{i-1}H^{n-i}\unknot, & \text{if } b \text{ is a bond in } \Theta,\\
    p_i(-A^2-A^{-2})\Theta^i H^{n-i-1}\unknot, & \text{if } b \text{ is a bond in } H.\\
\end{cases}    
\end{equation}
There are four cases to study, depending on the position of $b$.

\medskip

\noindent {\it Case 1.} If $b$ is a bond in $\Theta$, and there exists no such $j\in \{1,2,\ldots,n\}$ that $(g_0^{b})^*(\Theta^{i}H^{n-i})=(g_0^{b})^*(\Theta^j H^{n-j})$, then, by the induction hypothesis, it follows that $p_i=0$.

\medskip

\noindent {\it Case 2.} 
Similarly, if $b$ is a bond in $H$, and there exists no such $j\in \{1,2,\ldots,n\}$ that $(g_0^{b})^*(\Theta^{i}H^{n-i}) = (g_0^{b})^*(\Theta^j H^{n-j})$, then, by the induction hypothesis, we have $p_i(-A^2 - A^{-2}) = 0$, and since $R$ is an integral domain, it follows that $p_i = 0$.

\medskip

\noindent {\it Case 3.} If $b$ is a bond in $\Theta$, and if there exists a $j \in \{1,2,\ldots,n\}$, such that $(g_0^{b})^*(\Theta^{i}H^{n-i})=q(g_0^{b})^*(\Theta^j H^{n-j})$ for some $q\in R$, then $j=i-1$ and $b$ is a bond in $H$ in the left-hand side term $\Theta^j H^{n-j}$. We have:
\begin{equation}\label{eq:case2}
(g_0^{b})^*(p_j\Theta^jH^{n-j}) = 
    p_j(-A^2-A^{-2})\Theta^j H^{n-j-1}\unknot.
\end{equation}
By the induction hypothesis it follows from \eqref{eq:cases} and \eqref{eq:case2} that
 \begin{equation}\label{eq:ih1}
    p_i + p_j\Kauff = 0.
\end{equation}
We apply also $(g_\infty^{b})^*$ to the terms $p_i\Theta^{i}H^{n-i}$ and $p_{j}\Theta^{j}H^{n-j}$, and we obtain:
\begin{align*}
    (g_\infty^{b})^*(p_i\Theta^{i}H^{n-i}) &= p_i(-A^2-A^{-2})\Theta^{i-1}H^{n-i},\\  
    (g_\infty^{b})^*(p_j\Theta^{j}H^{n-j}) &= p_{j}\Theta^{j}H^{n-j-1}.
\end{align*}
Since $j=i-1$, it follows from the induction hypothesis that
\begin{equation}\label{eq:ih2}
    \Kauff p_i + p_j = 0.
\end{equation}
The system of equations given by \eqref{eq:ih1} and \eqref{eq:ih2} yields the solution
$$p_i(\Kauff^2-1) = 0 \qquad \text{and} \qquad  p_j(\Kauff^2-1) = 0.$$
Since $1-(-A^2-A^{-2})^2\neq 0$ and $R$ is an integral domain, it follows that $p_i = p_j =0$.

\medskip

\noindent {\it Case 4.} If $b$ is a bond in $H$, and if there exists a $j \in \{1,2,\ldots,n\}$, such that $(g_0^{b})^*(\Theta^{i}H^{n-i})=q(g_0^{b})^*(\Theta^j H^{n-j})$, for some $q\in R$, then $j=i+1$ and $b$ is a bond in $\Theta$ in the left-hand side term $\Theta^j H^{n-j}$. The equality $p_i = p_j =0$ follows from the same computations as in Case 3, except that we exchange $i$ and $j$.

\medskip

In summary, the framed bonded Kauffman bracket skein module $\kbsmfr$ is freely generated by $\basisfr$. \qed

\subsection{Normalization of the framed bonded KBSM}
Given a framed bonded knot or link $K$, we can evaluate the class $[K]$ in $\kbsmfr$ and express the class in terms of the basis. This expression, $[K]_\basisfr$, is by \autoref{thm:main} a framed bonded isotopy invariant of $K$.

If we choose an orientation on $K$, we can use the same techniques as in the classical bracket polynomial (definition \ref{def:bracket}) and defined a normalized rigid version $[K]_{\text{norm}}$ of $[K]$ in the following manner:
$$[K]_{\text{norm}} = (-A^3)^{-w(K)}[K]_{\basisfr},$$ where $w(K)$ is the writhe of the diagram $K$.

Furthermore, if $K$ is a framed bonded knot with $n$ bonds, and we wish to compute $[K]_\basisfr$ (or $[K]_\text{norm}$), we have to apply equation \eqref{eq:anti4} $(n-1)$-times to extract the $\Theta$ and $H$ generators. Thus, $[K]_\text{norm}$ is an expression of the form 
$$[K]_\text{norm} = \frac{1}{(1 + A^4)^{n-1}(1 + A^4 + A^8)^{n-1}} \big(p_0\,H^n + p_1\,\Theta H^{n-1} + p_2\,\Theta^2 H^{n-2} +\cdots + p_n\Theta^n\big),$$
 for some $p_i \in R.$ To avoid redundancy, we reduce this expression by multiplying $[K]_{\text{norm}}$ by $(1 + A^4)^{n-1}(1 + A^4 + A^8)^{n-1}$, and we define the \emph{reduced rigid bonded bracket polynomial} as
$$\llbracket K \rrbracket = (-A^3)^{-w(K)}(1 + A^4)^{n-1}(1 + A^4 + A^8)^{n-1} \, [K]_{\basisfr},$$
which is an element of the ring of Laurent polynomials in $A$ and polynomials in $\Theta$ and $H$ with integer coefficients, i.e. $\llbracket K \rrbracket \in \Z[A^{\pm 1}, \Theta, H]$. For an explicit computation of the invariant see \autoref{sec:example}.

\section{The case of topological bonded knots} \label{sec:topological}

Let us now consider the topological setting of bonded knots, where isotopy is generated by Reidemester moves presented in \autoref{fig:reidemeister}. Allowing move V, handcuff generators in this setting can be expressed  with thetas only, indeed:
$$
\raisebox{-0.9em}{\includegraphics[page=77]{pics/proofs.pdf}} = 
\raisebox{-0.9em}{\includegraphics[page=78]{pics/proofs.pdf}} = 
-A^{3}\,\raisebox{-0.9em}{\includegraphics[page=79]{pics/proofs.pdf}} =
-A^{3} \Big(A\, 
\raisebox{-0.9em}{\includegraphics[page=77]{pics/proofs.pdf}}
-A^{-1}\,
\raisebox{-0.9em}{\includegraphics[page=80]{pics/proofs.pdf}}
\Big)=
A^4\,
\raisebox{-0.9em}{\includegraphics[page=77]{pics/proofs.pdf}}
+A^{2}\,
\raisebox{-0.9em}{\includegraphics[page=81]{pics/proofs.pdf}}
$$
from where it follows that 
\begin{equation}\label{eq:h}
   H = (-A^2-A^{-2}) \Theta. 
\end{equation}

\begin{definition}
Let $R$ be an integral domain, where $A$ and $1+A^{4}$ are invertible in $R$.
Let $\linkstop$ a set of topological-vertex bonded links and let $R\linkstop$ the free $R$-module spanned by $\linkstop$. Let $\submodule'$ be the submodule of $R\linkstop$ generated by skein expressions 
\begin{equation*}
\icon{10} - A \icon{11} - A^{-1} \icon{12} \qquad \text{and} \qquad L \sqcup \nevozel - \Kauff L.
\end{equation*}
We define the topological bonded Kauffman bracket skein module as the quotient module $$\kbsmtop = R\linkstop \,/\,\submodule'.$$
\end{definition}

\begin{theorem}$\kbsmtop$ is freely generated by $\basistop = \{\emptyset, \Theta, \Theta^2, \ldots \}.$
\end{theorem}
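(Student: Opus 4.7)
The plan is to derive this theorem directly from Theorem \ref{thm:main} combined with the newly established relation \eqref{eq:h}, which collapses $H$ to $(-A^2-A^{-2})\Theta$ once move V is allowed.

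For the generating set, observe that the reduction carried out in Section \ref{sec:gen} uses only skein relations and Reidemeister moves (I--IV together with RV) that are present in both the rigid-vertex framed and the topological settings. Hence $\basisfr = \{\Theta^m H^n\}$ continues to generate $\kbsmtop$. Combining this with \eqref{eq:h} and Lemma \ref{lema} (whose proofs carry over verbatim), each generator collapses as $\Theta^m H^n = (-A^2-A^{-2})^n\Theta^{m+n}$, so $\basistop=\{\Theta^k\}$ is also a generating set.

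For linear independence, the plan is to realize $\kbsmtop$ as an explicit quotient of the free module $\kbsmfr$. Let $J \subseteq \kbsmfr$ be the submodule generated by all elements of the form $H\cdot L - (-A^2-A^{-2})\,\Theta\cdot L$ for $L \in \linksfr$, where $\cdot$ denotes disjoint union. Since $\kbsmfr$ is free on $\basisfr$ by Theorem \ref{thm:main}, a direct bookkeeping argument graded by the total bond count $n$ shows that $\kbsmfr/J$ is free over $\basistop$: in the $n$-graded piece, the $(n+1)$ basis elements $\Theta^i H^{n-i}$ all collapse via $J$ to $(-A^2-A^{-2})^{n-i}\Theta^n$, leaving $R\cdot\Theta^n$ as the entire $n$-th graded component. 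The natural topologization of a rigid-vertex framed diagram into a topological one descends to a surjection $\kbsmfr/J \twoheadrightarrow \kbsmtop$, and the final task is to argue that this map is injective.

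The main obstacle is this injectivity step, equivalently the statement that the only new relations appearing in $\kbsmtop$ beyond those already in $\kbsmfr$ are consequences of \eqref{eq:h} and its disjoint-union translates in $J$. The argument should exploit the locality of move V: given any bond $b$ inside an arbitrary diagram $D$, Lemma \ref{lema} lets one isolate a neighbourhood of $b$ as a connect-summand $\Theta$ or $H$ of $D$, reducing every V-move applied at $b$ to the elementary local identity already handled in \eqref{eq:h}. Making this locality and connect-summand argument precise, i.e.~writing an arbitrary V-move instance explicitly as an element of $J$ inside $\kbsmfr$, is the principal technical content of the proof.
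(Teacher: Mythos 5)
Your generating-set argument and the bookkeeping showing that $\kbsmfr/J$ is free on the images of $\{\Theta^k\}$ are fine, but the proof has a genuine gap exactly where you locate it: the injectivity of $\kbsmfr/J \twoheadrightarrow \kbsmtop$, i.e.\ the claim that the kernel of the natural map $\kbsmfr \to \kbsmtop$ is precisely $J$. What must be shown is that whenever two rigid-vertex framed diagrams become isotopic after allowing move V, their classes in $\kbsmfr$ differ by an element of $J$; this requires handling a single V-move performed at a bond sitting in an \emph{arbitrary} diagram context. Your sketch for this step misuses Lemma~\ref{lema}: that lemma only splits off a summand when the diagram is \emph{already} a connected sum with an elementary $\Theta$ or $H$, and it gives no way to ``isolate a neighbourhood of $b$ as a connect-summand'' of a general diagram — a bond can be knotted and linked with the rest of the diagram, and extracting it is exactly the content of the skein computation \eqref{eq:anti1}--\eqref{eq:anti4}, which produces a linear combination, not a connected sum. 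So the reduction of an arbitrary V-move instance to the elementary identity \eqref{eq:h} is not established; one would instead have to prove a local statement (e.g.\ that the two sides of a V-move, closed up by any tangle, have difference in $J$), for instance by expanding both sides in the basis of Theorem~\ref{thm:main} via \eqref{eq:anti4} and comparing coefficients, and this is precisely the work your proposal defers. A further hazard of the quotient strategy: whether $\ker(\kbsmfr\to\kbsmtop)=J$ is sensitive to exactly which moves separate the two settings (vertex flip only, versus also unframed kink moves), so the strategy needs that comparison made explicit before the kernel can be identified.

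For contrast, the paper does not present $\kbsmtop$ as a quotient of $\kbsmfr$ at all. It reproves generation inside $\kbsmtop$ (observing that \eqref{eq:anti1}--\eqref{eq:anti4} still hold there and collapse via \eqref{eq:h} to express any $n$-bond diagram as $p\,\Theta^n$), and then proves linear independence of $\{\Theta^n\}$ directly in $\kbsmtop$ by the same bond-deletion technique as in \autoref{sec:free}, using the simplified induced maps $(g_0)^*:\Theta\mapsto$ unknot and $(g_\infty)^*:\Theta\mapsto(-A^2-A^{-2})\,$unknot (whose well-definedness must now be checked against move V rather than RV, which is where kinks created by the flip are absorbed). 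That route avoids identifying the kernel of any comparison map, which is the step your proposal leaves open; if you want to keep your quotient formulation, you should either carry out the local ``V-move difference lies in $J$'' computation, or fall back on proving independence in $\kbsmtop$ directly as the paper does.
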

\begin{proof}
As before, $\kbsmtop$ admits a natural grading
$$\kbsmtop = \bigoplus_{n=0}^\infty \kbsmtop_n.$$
To show that $\kbsmtop_n$ is generated by $\Theta^n$, we 
observe that equalities  \eqref{eq:anti1} and \eqref{eq:anti2}, as well as formula \eqref{eq:anti4} still holds in $\kbsmtop$. Using \eqref{eq:h}, the equality \eqref{eq:anti4} simplifies to
\begin{equation*} 
(-A^2-A^{-2})\Q{1}  =  \Theta \Q{21} \quad \text{and} \quad (-A^2-A^{-2})\Q{10}  =  \Theta \Q{20}
\end{equation*}
Thus, we can use the above equality to express a diagram with $n$ bonds as a term of the form $p\, \Theta^n$, where $p \in R.$

For freeness, we can use the same procedure as in \autoref{sec:free}, except that the computation greatly simplifies due to the fact that we have less generators and we only need to consider the simplified maps $$(g_0)^*: \Theta \rightarrow \unknot \qquad \text{and} \qquad (g_\infty)^*: \Theta \rightarrow (-A^2-A^{-2})\unknot.$$ 
\end{proof}

\section{An example} \label{sec:example}
Let us take the TRTX-Tp1a toxin protein from \autoref{fig:valvet} and denote the associated bonded knot by $K$. This protein is also interesting from the viewpoint that is contains a cystine knot \cite{cardoso2019structure, dabrowski2019knotprot}.
\begin{figure}[h]
    \centering
    \includegraphics[width=0.5\textwidth,page=3]{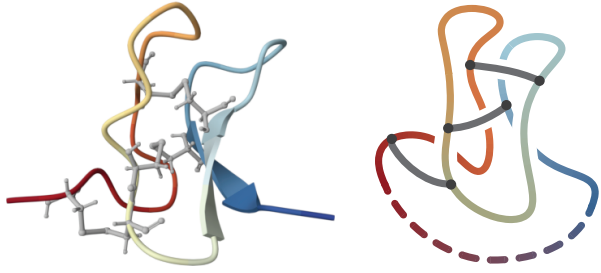} 
    \caption{{\bf The bonded structure of the TRTX-Tp1a toxin.} Left: the 3D ribbon model of the TRTX-Tp1a toxin (PDB 2MXM) from the Peruvian green velvet tarantula (Thrixopelma pruriens), a species rarely kept as a pet due to its tendency to react aggressively to minimal provocation. Right: the associated bonded knot diagram closed with a direct segment. Toxins frequently comprise short peptide chains stabilized by multiple disulfide bonds. These bonds are essential for maintaining the compact, bioactive conformation critical to the toxin’s potency at low concentrations. Additionally, the disulfide bonds enhance structural stability and resistance to degradation, ensuring that the toxin remains active longer.} 
    \label{fig:valvet}
\end{figure}

For easier computations, we introduce two new variables, $\delta = (-A^2-A^{-2})$ (the Kauffman term) and $\mu = (A^{-4}+1+A^4)$, and rewrite equation \eqref{eq:anti4} in a more compact form:
\begin{equation} \label{eq:ab}
\Q{1} = \alpha \Q{21}  + \beta \Q{20}
\end{equation}
where 
$\alpha= \frac{1}{\delta\mu} (\delta H - \Theta)$ and 
$\beta = \frac{1}{\delta\mu} (\delta \Theta - H)$.

Before computing the value of $[K]_\basisfr \in \kbsmfr$, we first compute the values of the following framed bonded knots, which will be used in subsequent computations (we omit the bracket for clarity):
\begin{align*}
\A{-0.3em}{12} & \eqeq{eq:sum} \tfrac{1}{\delta} H^2
\\
\A{-0.3em}{13} & \eqeq{eq:sum} \tfrac{1}{\delta}  \Theta H
\\
\A{-0.3em}{14} & \eqeq{eq:ab} \alpha \, H + \beta \, \Theta 
= \tfrac{1}{\mu}\Theta^2-\tfrac{2}{\delta\mu}H\Theta + \tfrac{1}{\mu}H^2
\\
\A{-0.3em}{15} & \eqeq{eq:sum} \tfrac{1}{\delta} \Theta^2  
\\
\A{-1.4em}{18} & = \A{-1.4em}{16} \eqeq{eq:kinkA} -A^{-3} \A{-1.4em}{17} 
\eqeq{eq:skein} -A^{-3} ( A \A{-1.4em}{19} +A^{-1} \A{-0.2em}{14} )
=
\tfrac{-1}{A^4\mu}\Theta^2
+\tfrac{2}{A^4\delta\mu}\Theta H
-\tfrac{A^2}{\delta\mu} H^2
\\
\A{-1.4em}{7} & \eqeq{eq:kinkA} \A{-1.4em}{8} =-A^{-3} \A{-1.4em}{9}
\eqeq{eq:skein} -A^{-3} ( A \A{-1.4em}{19} +A^{-1} \A{-1.4em}{18} ) \\
&\,= 
\tfrac{1}{A^8\mu} \Theta^2
-\tfrac{2}{A^8\delta\mu} \Theta H
-\tfrac{\left(1+A^{8}\right)}{A^6\delta\mu} H^2
\end{align*}
Similarly, we can compute
$$ \A{-1.4em}{6} = 
-\frac{1}{\delta\mu}\Theta^2
+\frac{A^{-6}+A^{6}}{\delta\mu}H\Theta
+\frac{A^{-4}+A^{4}}{\delta\mu}H^2.
$$

We now proceed to compute the invariant for our toxin. First, we use the skein relations \eqref{eq:skein} to remove crossings of $K \in \kbsmfr$, then we use equations \eqref{eq:sum} and \eqref{eq:ab} to extract one bond:
\begin{align*}
K = \A{-1.8em}{1} &= A^2 \A{-1.8em}{2}  + \A{-1.8em}{3} + \A{-1.8em}{4} + A^{-2}\A{-1.8em}{5} \\
&= \frac{A^2}{\delta} \Theta\, \A{-0.9em}{7}  +  \Theta\, \A{-0.9em}{7} + \Big(\alpha \A{-0.9em}{6}+ \beta \A{-0.9em}{7}\Big) +\frac{1}{A^{2}\delta} \Theta\, \A{-0.9em}{7}.
\end{align*}
Substituting the 2-bond knots with the previously computed ones, we obtain
\begin{align*}
 [K]_{\basisfr} =  \frac{1}{\delta^2\mu^2}\Big(&
(A^{-12}+2A^{-8}+A^{-4}+1)\Theta^3
+(3A^{-10}-A^6+2A^{-6}+A^2+A^{-2})\Theta^2H\\
&+(-A^8+2 A^{-8}-A^4-A^{-4}+1)\Theta H^2
+(-A^6-1A^{-2})H^3.
\Big)
\end{align*}    
The reduced rigid bonded bracket polynomial is
\begin{align*}
 \llbracket K \rrbracket =&\; 
(A^{12}+A^8+2A^4+1)\Theta^3
+(-A^{18}+A^{14}+A^{10}+2A^6+3A^2)\Theta^2H\\
&+(-A^{20}-A^{16}+A^{12}-A^8+2A^4)\Theta H^2
+(-A^{18}-A^{10})H^3.
\end{align*}    
By substituting $H = \delta \,\Theta$ \eqref{eq:h} and by $w(K)=0$, the topological Kauffman bracket skein module of $K$ is
$$[K]_{\basistop} = \frac{A^4}{(1+A^4)^2} \Theta^3.$$

\section*{Acknowledgments}
This work was financially supported by the Slovenian Research and Innovation Agency grants P1-J1-4031 and N1-0278, and program P1-029.
The authors declare that they have no conflict of interest.

\bibliographystyle{abbrv}
\bibliography{biblio}

\end{document}